\numberwithin{equation}{section}
\theoremstyle{plain}
\newtheorem{theorem}{Theorem}[section]
\newtheorem{lemma}[theorem]{Lemma}
\newtheorem{corollary}[theorem]{Corollary}
\theoremstyle{definition}
\theoremstyle{remark}
\newtheorem{remark}[theorem]{Remark}
\newcommand{\numberset}{\mathbb}
\newcommand{\R}{\numberset{R}}
\DeclarePairedDelimiter{\abs}{\lvert}{\rvert}
\DeclarePairedDelimiter{\ceil}{\lceil}{\rceil}
\DeclarePairedDelimiter{\tond}{(}{)} 
\DeclarePairedDelimiter{\quadr}{[}{]}
\DeclareMathOperator{\supp}{Supp}
\DeclareMathOperator{\law}{law}
\DeclareMathOperator{\wmix}{w_{mix}}
\newcommand{\tmix}{\mathrm{t}_{\mathrm{mix}}}
\newcommand{\eps}{\varepsilon}
\newcommand{\tmixa}[1]{\tmix\tond*{#1}}
\newcommand{\hess}{\nabla^2}
\newcommand{\variwr}[2]{\mathrm{Var}_{#1}\tond*{#2}}
\newcommand{\expe}[1]{\mathbb{E}\quadr*{#1}}
\newcommand{\totvar}[2]{\mathrm{TV}\tond*{#1,  #2}}
\newcommand{\kldiv}[2]{{H}\tond*{#1\,\middle |\,  #2}}
\newcommand{\chisq}[2]{\chi^2\tond*{#1\,\middle |\,  #2}}
\newcommand{\pp}{\mathcal{P}}
\newcommand{\relden}[2]{\frac{\mathrm{d}#1}{\mathrm{d}#2}}
\newcommand{\pib}{\boldsymbol{\pi}}
\newcommand{\cpi}[1]{C_{\rm{P}}\tond*{#1}}
\newcommand{\hatcpi}[1]{\widehat{C}_{\rm{P}}\tond*{#1}}
\title{A transport approach to the cutoff phenomenon}
\author{Francesco Pedrotti and Justin Salez}
\begin{document}
	
	\maketitle
	
	\begin{abstract}
Substantial progress has recently been made in the understanding of the cutoff phenomenon for Markov processes, using an information-theoretic statistics known as  \emph{varentropy}  \cite{sal-2023,sal-2024,sal-2025,ped-sal-2025}.
		In the present paper, we propose an alternative approach which  bypasses the use of varentropy and   exploits instead a new W-TV transport inequality, combined with a classical  parabolic regularization estimate \cite{bob-gen-led-2001, ott-vil-2001}. While currently restricted to non-negatively curved processes on smooth spaces, our argument no longer requires the chain rule, nor any approximate version thereof. As applications, we recover the main result of \cite{sal-2025} establishing cutoff for the log-concave Langevin dynamics, and extend the conclusion to a widely-used discrete-time sampling algorithm known as the Proximal Sampler. 
	\end{abstract}

	\tableofcontents

	\section{Introduction}
The broad aim of the present work is to investigate the nature of the transition from out of equilibrium to equilibrium in MCMC algorithms, a popular class of methods for sampling from a target measure $\pi\in\pp\tond*{\R^d}$ by simulating a stochastic process that is ergodic with respect to $\pi$. We will here focus on two particular implementations that are widely used in practice: the Langevin dynamics, and the Proximal Sampler. Throughout the paper, we will assume that the target measure $\pi$ is \emph{log-concave}, i.e. of the form
	\begin{eqnarray*}
	\pi({\mathrm d}x) & = &  e^{-V(x)}{\mathrm d}x,
	\end{eqnarray*}
	 for some convex potential $V\in C^2(\R^d)$. 
 
	\subsection{The Langevin dynamics}The \emph{Langevin dynamics} with target distribution $\pi$ and initialization $\mu_0\in \pp\tond*{\R^d}$ is simply the solution to the stochastic differential equation
	\begin{eqnarray}
		 \label{eq:LD}
		X_0\sim \mu_0, \quad {\mathrm d}X_t & = & -\nabla V(X_t)\,{\mathrm d}t + \sqrt{2}\,{\mathrm d}B_t,
\end{eqnarray}
	where $(B_t)_{t\ge 0}$ is a standard $d-$dimensional Brownian motion. Under our assumptions, it is well  known that the marginal law $\mu_t\coloneqq\law(X_t)$ approaches  $\pi$ as $t\to\infty$. Moreover, a variety of  quantitative convergence guarantees are available in different metrics; see, e.g., \cite{bak-gen-led-2014} for an extensive treatment. In particular,  the    parabolic regularization estimate 
	\begin{eqnarray}\label{eq:KL-W2-reg}
		\forall t>0,\quad	\kldiv{\mu_t}{\pi} & \leq & \frac{W^2(\mu_0,\pi)}{4t},
	\end{eqnarray}
was proved in \cite{bob-gen-led-2001, ott-vil-2001}, and used to recover and generalize the celebrated HWI inequality from \cite{ott-vil-2000}. Here and throughout the paper, we use the classical notation  
	\begin{eqnarray*}
W^2(\mu,\pi) & \coloneqq &  \inf_{X\sim\mu, Y\sim \pi} \expe{\abs*{X-Y}^2},
\end{eqnarray*}
for the $2$-Wasserstein distance between $\mu$ and $\pi$, and 
\begin{eqnarray*}
\kldiv{\mu}{\pi}& \coloneqq & 
			\int \log\tond*{\relden{\mu}{\pi}} \mathrm{d}\mu,
	\end{eqnarray*}
for the relative entropy (or KL-divergence) of $\mu$ with respect to $\pi$. As usual, it is  understood that $\kldiv{\mu}{\pi}=\infty$ when $\mu$ is not absolutely continuous with respect to $\pi$. To translate \eqref{eq:KL-W2-reg} into the classical language of mixing times, we let
\begin{eqnarray}
\totvar{\mu}{\pi} & \coloneqq &  \sup_{A\in \mathcal{B}(\R^d)}\abs*{\mu(A)-\pi(A)}
 \end{eqnarray} 
denote the total-variation distance between $\mu$ and $\pi$, and we recall that the  \emph{mixing time}
 of the process $(X_t)_{t\ge 0}$ with initialization $\mu_0$  and precision $\varepsilon\in(0,1)$ is defined as
 \begin{eqnarray}
\label{def:mixing}
 \tmix(\mu_0,\varepsilon) & \coloneqq & \min\{t\ge 0\colon \totvar{\mu_t}{\pi}\le\varepsilon \}.
 \end{eqnarray} 
 It then readily follows from  \eqref{eq:KL-W2-reg}  and Pinsker's  inequality 
that 
\begin{eqnarray}
\label{eq:tmixLS}
\forall \varepsilon\in(0,1),\quad \tmix(\mu_0,\varepsilon) & \le & \frac{W^2(\mu_0,\pi)}{8\varepsilon^2}.
 \end{eqnarray} 
In concrete words, running the Langevin dynamics  for a time of order $W^2(\mu_0,\pi)$ suffices  to  approximately sample from $\pi$. Unfortunately, the continuous-time nature of the  Langevin dynamics is not appropriate for practical implementation, and a suitable  discretization   is required. The simplest choice is  the Euler--Maruyama method, in which a step size $h>0$ is chosen and a discrete-time process $(X_k)_{k\in\mathbb N}$ is produced via the stochastic recursion
	\begin{eqnarray}\label{eq:LMC}
		X_0\sim\mu_0,\quad X_{k+1} & = & X_k -h\nabla V(X_k) + \sqrt{2}\tond*{B_{(k+1)h} - B_{kh}}.
	\end{eqnarray}
	This sampling scheme is known as the \emph{Langevin Monte Carlo (LMC) algorithm} or  \emph{Unadjusted Langevin algorithm}. It is of fundamental importance, and has been extensively studied. 	We refer the unfamiliar reader to the book in progress \cite{che-book-2024+} and the references therein.  One drawback of this approach, however, is that the LMC algorithm is \emph{biased}:  the stationary distribution of \eqref{eq:LMC} is in general different from $\pi$. As a consequence, theoretical performance guarantees require the step size to be very small, resulting in an increased number of iterations compared to the theoretical time-scale \eqref{eq:tmixLS}.

	\subsection{The Proximal Sampler}
The \emph{Proximal Sampler}	is an unbiased discrete-time algorithm for sampling from $\pi$  introduced in \cite{lee-she-tia-2021}. 	We  refer the reader to \cite[Chapter 8]{che-book-2024+} or the recent papers \cite{che-eld-2022, mit-wib-2025, wib-2025} for more details. As usual, we denote by $\gamma_{x,t}$ the $d$-dimensional Gaussian law with mean $x$ and covariance matrix $tI_d$, and we use the shorthand $\gamma_t \coloneqq \gamma_{0,t}$. Given a step size $h>0$, we  consider a pair  $(X,Y)$ of $\R^d-$valued random variables with joint law
	\begin{eqnarray}
		\pib({\mathrm d}x\, {\mathrm d}y) & \propto &  \exp\tond*{-V(x) - \frac{\abs*{x-y}^2}{2h}}{\mathrm d}x\, {\mathrm d}y.
	\end{eqnarray}
The Proximal Sampler with target $\pi$ consists in applying alternating Gibbs sampling to $\pib$. More precisely,  a sequence  $X_0,Y_0,X_1,Y_1,\ldots$ of $\R^d$-valued random variables is produced by first sampling $X_0$ according to some prescribed initialization $\mu_0\in \pp\tond*{\R^d}$ and then inductively, for each $k\ge 0$, sampling $Y_k$ and $X_{k+1}$ according to the following laws:
\begin{enumerate}
\item Forward step: conditionally on $(X_0,Y_0,\ldots,X_{k-1},Y_{k-1},X_k)$,  $Y_k$ is distributed as 
\begin{eqnarray}
 \label{eq:forward-step-prox}
Y_k & \sim & \law\tond*{ Y \mid X = X_k}  \ = \ \gamma_{X_k, h}.
\end{eqnarray}
\item Backward step: conditionally on $(X_0,Y_0,\ldots,X_k,Y_k)$, $X_{k+1}$ is distributed as
\begin{eqnarray}
\label{eq:backward-step-prox}
X_{k+1} & \sim &  \law\tond*{ X \mid Y = Y_k} \ \propto \ \exp \tond*{-V(x)-\frac{\abs*{x-Y_k}^2}{2h}}\mathrm{d}x.
\end{eqnarray}
\end{enumerate}
Since the first marginal of $\pib$ is $\pi$, the algorithm is \emph{unbiased}, meaning that $\pi$ is stationary for the Markov chain $(X_k)_{k\ge 0}$. Moreover, the forward step is trivial to implement, as it   amounts to  adding an independent Gaussian noise. When   $h$ is small enough, the backward step  is also tractable, due to the regularizing effect of the quadratic potential in \eqref{eq:backward-step-prox}. For example, if $\pi$ is $\alpha$-log-concave and $\beta$-log-smooth (i.e. $\alpha I_d\preccurlyeq \hess V \preccurlyeq \beta I_d$), then the conditional law \eqref{eq:backward-step-prox} is $\tond*{\alpha + \frac1h}$-log-concave with condition number $\kappa_h = \frac{1+\beta h}{1+\alpha h} < \frac{\beta}{\alpha}$. As a result, several methods are available to efficiently generate  $X_{k+1}$, such as rejection sampling \cite{che-che-sal-wib-2022}, approximate rejection sampling \cite{fan-yua-che-2023}, or high-accuracy samplers \cite{alt-che-2024-faster}. Following a standard convention in this setting \cite{che-che-sal-wib-2022}, we will here assume to have access to a \emph{restricted Gaussian oracle} that implements the backward step \eqref{eq:backward-step-prox} exactly, and focus on the number of iterations needed for $\mu_k\coloneqq\law(X_k)$ to approach $\pi$, in the sense of  \eqref{def:mixing}.

	Although this is  not obvious from the above  description, the Proximal Sampler can be interpreted as a discretization of the Langevin dynamics \eqref{eq:LD}. This is particularly clear once we view those dynamics as minimizing schemes for the relative entropy functional  $\kldiv{\cdot}{\pi}$ in the Wasserstein space $\tond*{\pp_2(\R^d), W}$ \cite{jor-kin-ott-1998, che-book-2024+}.
	In light of this, it is not too surprising that many  classical convergence guarantees for the Langevin dynamics translate to the Proximal Sampler. In particular, the following analogue of the parabolic regularization estimate \eqref{eq:KL-W2-reg}  was recently established in \cite{che-che-sal-wib-2022}: 
\begin{eqnarray}
 \label{eq:KL-W2-reg-prox-sampl}
\forall k\in\mathbb N,\quad \kldiv{\mu_k}{\pi} & \leq & \frac{W^2\tond*{\mu_0,\pi}}{kh}.
	\end{eqnarray}	
By virtue of Pinsker's inequality, this  readily yields the (discrete-time) mixing-time estimate
	\begin{eqnarray}
\forall \varepsilon\in(0,1),\quad \tmix(\mu_0,\varepsilon) & \le & \left\lceil\frac{W^2(\mu_0,\pi)}{2h\varepsilon^2}\right\rceil.
 \end{eqnarray} 
In concrete words, running the Proximal Sampler with step size $h$ for roughly $W^2\tond*{\mu_0,\pi}/h$ iterations suffices to produce  approximate samples from the target distribution $\pi$.

\subsection{Main results}
Rather than asking \emph{how long} the Langevin dynamics or the Proximal Sampler should be run in order to be close to equilibrium, we would here like to understand \emph{how abrupt} their transition from out of equilibrium to equilibrium is. In other words, we seek to estimate the \emph{width} of the mixing window, defined for any precision $\varepsilon\in\tond*{0,\frac 12}$ by
\begin{eqnarray}
\wmix(\mu_0,\eps) & \coloneqq & \tmix(\mu_0,\varepsilon)-\tmix(\mu_0,1-\eps).
\end{eqnarray}
The analysis of this fundamental  quantity is a challenging task which, until recently, had only been carried out in a handful of models. Over the past couple of years, a  systematic approach to this problem was   developed in the series of works \cite{sal-2023,sal-2024,sal-2025,ped-sal-2025}, using an  information-theoretic notion called \emph{varentropy}. 	In the present paper, we propose an alternative approach which completely bypasses the use of varentropy, and instead  exploits the parabolic regularization estimates \eqref{eq:KL-W2-reg} and \eqref{eq:KL-W2-reg-prox-sampl} directly, in conjunction with a new W-TV transport inequality. As a first application, we are able to recover exactly the main result of \cite{sal-2025}, which reads as follows. Recall that the \emph{Poincaré constant} of  $\pi\in\pp\tond*{\R^d}$, denoted  $\cpi{\pi}$, is the smallest number such that
	\begin{eqnarray}
	\label{def:PI}
		\variwr{\pi}{f} & \leq & \cpi{\pi} \int \abs*{\nabla f}^2 d\pi,
	\end{eqnarray}
 for all smooth functions $f\colon \R^d\to\R$. In particular, $\cpi{\delta_x}=0$, for any $x\in\R^d$.
\begin{theorem}[Mixing window of the Langevin dynamics]
				\label{thm:mix-window-LD} 
The Langevin dynamics  with  a log-concave target $\pi\in\pp\tond*{\R^d}$ and an arbitrary initialization $\mu_0\in\pp\tond*{\R^d}$  satisfies
		\begin{eqnarray*}
\wmix(\mu_0,\eps) & \leq & \frac{3}{\eps}\tond*{\cpi{\pi} + \sqrt{\cpi{\pi}\cpi{\mu_0}}+ \sqrt{\cpi{\pi}\tmix(\mu_0,1-\eps)}},
		\end{eqnarray*}
for  any precision $\eps \in \tond*{0,\frac12}$.
	\end{theorem}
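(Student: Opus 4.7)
The plan is to bound $\wmix(\mu_0,\eps) = \tmix(\mu_0,\eps) - s$ where $s \coloneqq \tmix(\mu_0, 1-\eps)$; at time $s$ we know $\totvar{\mu_s}{\pi} \le 1-\eps$, and we must estimate how much additional time suffices to bring the total-variation distance below $\eps$. The three ingredients I would combine are the parabolic regularization \eqref{eq:KL-W2-reg} (applied with $\mu_t$ as the initial law, for any $t \ge s$), Pinsker's inequality, and a new W-TV transport inequality of the expected shape
$$W^2(\mu, \pi) \;\lesssim\; \totvar{\mu}{\pi}\,\bigl(\cpi{\pi} + \sqrt{\cpi{\pi}\cpi{\mu}}\bigr).$$
Establishing such an inequality — linear in $\totvar{\mu}{\pi}$, with a Poincaré prefactor that splits symmetrically through a geometric mean — is the main obstacle; the rest of the argument is essentially a wrapper around it.

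Granting this W-TV inequality, parabolic regularization and Pinsker combine to give, for any $t \ge s$ and $\tau > 0$,
$$\totvar{\mu_{t+\tau}}{\pi}^2 \;\le\; \tfrac{1}{2}\,\kldiv{\mu_{t+\tau}}{\pi} \;\le\; \frac{W^2(\mu_t,\pi)}{8\tau} \;\lesssim\; \frac{\totvar{\mu_t}{\pi}\bigl(\cpi{\pi} + \sqrt{\cpi{\pi}\cpi{\mu_t}}\bigr)}{\tau}.$$
A single application at $t = s$ would produce only the suboptimal $1/\eps^2$ rate. To recover the sharp $1/\eps$ rate in the theorem I would instead iterate a halving argument: given $\totvar{\mu_t}{\pi} \le a$, the above estimate yields $\totvar{\mu_{t+\tau}}{\pi} \le a/2$ as soon as $\tau \gtrsim (\cpi{\pi} + \sqrt{\cpi{\pi}\cpi{\mu_t}})/a$. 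Starting from $a_0 = 1-\eps$ and halving through $a_j = 2^{-j}(1-\eps)$ until $a_J \le \eps$ requires $\sim \log_2(1/\eps)$ steps, and the required waiting times form a geometric series whose total is dominated by its last term, of order $(\cpi{\pi}+\sqrt{\cpi{\pi}\cpi{\mu_s}})/\eps$. This geometric cancellation is precisely what upgrades the naive Pinsker rate to the sharper $1/\eps$.

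It remains to control $\cpi{\mu_t}$ along the trajectory in terms of $\cpi{\mu_0}$ and $t$. The Gaussian component of \eqref{eq:LD} has a smoothing effect, and I would invoke a semigroup estimate of the form $\cpi{\mu_t} \le \cpi{\mu_0} + 2t$ (a variant of Poincaré-under-Gaussian-convolution). Subadditivity of the square root then yields
$$\sqrt{\cpi{\pi}\cpi{\mu_s}} \;\le\; \sqrt{\cpi{\pi}\cpi{\mu_0}} + \sqrt{2\cpi{\pi}\,s},$$
so that plugging $s = \tmix(\mu_0, 1-\eps)$ into the bound $(\cpi{\pi}+\sqrt{\cpi{\pi}\cpi{\mu_s}})/\eps$ reproduces exactly the three terms $\cpi{\pi}$, $\sqrt{\cpi{\pi}\cpi{\mu_0}}$ and $\sqrt{\cpi{\pi}\tmix(\mu_0,1-\eps)}$ appearing in the stated conclusion, up to the absolute constants absorbed into the $3/\eps$ prefactor.
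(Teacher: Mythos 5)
Your overall skeleton---apply a W-TV transport inequality at $s=\tmix(\mu_0,1-\eps)$, then parabolic regularization \eqref{eq:KL-W2-reg}, then close the remaining gap---matches the paper's, and you correctly identify the W-TV inequality as the crux. Two things diverge, and one of them is a genuine gap. A minor point first: the conjectured W-TV shape is not the one the paper proves. Theorem~\ref{th:WTV} reads $W^2(\mu,\nu)\le 4(\cpi{\mu}+\cpi{\nu})\,\totvar{\mu}{\nu}/(1-\totvar{\mu}{\nu})$, an \emph{arithmetic} mean of the two Poincar\'e constants with an extra $(1-\mathrm{TV})^{-1}$ factor. The geometric mean $\sqrt{\cpi{\pi}\cpi{\mu_0}}$ in the theorem's conclusion does not come from the transport inequality; it comes from optimizing a free waiting parameter in the final assembly.

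The real gap is the ``last mile.'' You rightly note that parabolic regularization followed by Pinsker gives only a $1/\eps^2$ rate in one shot, and you propose to repair this with a halving iteration, claiming the waiting times form a geometric series. They do not: each $\tau_j$ depends on $\cpi{\mu_{t_j}}$, which by the local estimate $\cpi{\mu_t}\le\cpi{\mu_0}+2t$ grows linearly in the elapsed time, and this feeds back into the iteration. Concretely, $\tau_j\gtrsim 2^j(\cpi{\pi}+\sqrt{\cpi{\pi}\cpi{\mu_{t_j}}})$ with $\cpi{\mu_{t_j}}\le\cpi{\mu_s}+2\sum_{i<j}\tau_i$; writing $T=\sum_j\tau_j$ and summing, one gets a self-consistent bound $T\lesssim\frac{1}{\eps}\bigl(\cpi{\pi}+\sqrt{\cpi{\pi}\cpi{\mu_s}}+\sqrt{\cpi{\pi}T}\bigr)$, whose solution carries an unavoidable $\cpi{\pi}/\eps^2$ term that is absent from the stated bound. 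The paper avoids iterating Pinsker altogether. Its replacement for the last mile is the mixing-time estimate \eqref{eq:sal-fast-tv-mixing-small-kl}, $\tmix(\mu,\eps)\le\cpi{\pi}\bigl(1+\kldiv{\mu}{\pi}\bigr)/\eps$, obtained from exponential decay of $\chi^2$ under the Poincar\'e inequality for the \emph{fixed} target $\pi$ (not Pinsker), so the growth of $\cpi{\mu_t}$ never re-enters. Applying the W-TV inequality once at $t_0$, running parabolic regularization for an additional time $s$, applying \eqref{eq:sal-fast-tv-mixing-small-kl} to $\mu_{t_0+s}$, and then optimizing over $s$ is what simultaneously produces the $1/\eps$ rate and the geometric-mean term $\sqrt{\cpi{\pi}\cpi{\mu_{t_0}}}$.
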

The interest of this estimate lies in its relation to the \emph{cutoff phenomenon}, a remarkably universal -- but still largely unexplained -- phase transition from out of equilibrium to equilibrium  undergone by certain ergodic Markov processes in an appropriate limit. We refer the unfamiliar reader to the recent lecture notes \cite{salez2025modernaspectsmarkovchains} and the references therein for an up-to-date introduction to this fascinating question.

	\begin{corollary}[Cutoff for the Langevin dynamics] \label{cor:cutoff-LD}
	Consider the setup of Theorem \ref{thm:mix-window-LD}, but assume that the ambient dimension $d$, the target  $\pi$, and the initialization $\mu_0$ now depend on an implicit parameter $n\in\mathbb N$, in such a way that  
		\begin{eqnarray}\label{eq:prod-cond-cpi}
	 \frac{\tmix(\mu_0,\eps)}{\cpi{\pi}+\sqrt{\cpi{\pi}\cpi{\mu_0}}} & \xrightarrow[n\to\infty]{} & +\infty,
\end{eqnarray}		 
 for some fixed $\eps \in (0,1)$. Then, a cutoff occurs, in the sense that for every  $\varepsilon\in(0,1)$,
		\begin{eqnarray}\label{def:cutoff}
	 \frac{\tmix(\mu_0,1-\eps)}{\tmix(\mu_0,\eps)} & \xrightarrow[n\to\infty]{} & 1.
\end{eqnarray}		 
	\end{corollary}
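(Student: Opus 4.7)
The plan is to deduce the corollary directly from Theorem \ref{thm:mix-window-LD}, with only some monotonicity-based bookkeeping. The key observation is that the cutoff statement \eqref{def:cutoff} amounts, for $\eps\in(0,1/2)$, to showing that the mixing window $\wmix(\mu_0,\eps)=\tmix(\mu_0,\eps)-\tmix(\mu_0,1-\eps)$ is negligible compared to $\tmix(\mu_0,\eps)$, which is precisely what Theorem \ref{thm:mix-window-LD} delivers under the hypothesis \eqref{eq:prod-cond-cpi}.

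First I would reduce to a convenient regime. Since $\tmix(\mu_0,\cdot)$ is non-increasing, the ratio in \eqref{eq:prod-cond-cpi} is non-increasing in $\eps$, so the hypothesis, if it holds for some $\eps\in(0,1)$, remains true for all smaller tolerances. Thus I may assume that it holds at some $\eps^*\in(0,1/2)$. Now fix any $\eps\in(0,\eps^*]$. Monotonicity gives $\tmix(\mu_0,1-\eps)\le\tmix(\mu_0,\eps)$, so bounding the square-root term in Theorem \ref{thm:mix-window-LD} by $\sqrt{\cpi{\pi}\tmix(\mu_0,\eps)}$ and dividing through by $\tmix(\mu_0,\eps)$ yields
\begin{eqnarray*}
0 \ \le\ 1-\frac{\tmix(\mu_0,1-\eps)}{\tmix(\mu_0,\eps)} & \le & \frac{3}{\eps}\left(\frac{\cpi{\pi}+\sqrt{\cpi{\pi}\cpi{\mu_0}}}{\tmix(\mu_0,\eps)}+\sqrt{\frac{\cpi{\pi}}{\tmix(\mu_0,\eps)}}\right).
\end{eqnarray*}
Both terms on the right vanish as $n\to\infty$: the first by the hypothesis (valid at $\eps$ by the reduction above), and the second since $\cpi{\pi}\le\cpi{\pi}+\sqrt{\cpi{\pi}\cpi{\mu_0}}$. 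This proves \eqref{def:cutoff} for every $\eps\in(0,\eps^*]$.

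To extend the conclusion to all $\eps\in(0,1)$, I would invoke monotonicity one final time: for $\eps\in(\eps^*,1/2)$ we have $\tmix(\mu_0,1-\eps)/\tmix(\mu_0,\eps)\ge\tmix(\mu_0,1-\eps^*)/\tmix(\mu_0,\eps^*)\to 1$; the case $\eps=1/2$ is trivial; and $\eps\in(1/2,1)$ follows by inverting the ratio and applying the previous cases to $1-\eps$. I do not anticipate any genuine obstacle here, since all the real work is contained in Theorem \ref{thm:mix-window-LD}: the deduction of \eqref{def:cutoff} is essentially a one-line algebraic manipulation combined with monotonicity.
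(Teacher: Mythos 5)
Your proof is correct and takes essentially the same route as the paper's: apply Theorem \ref{thm:mix-window-LD}, use monotonicity of $\tmix(\mu_0,\cdot)$ to propagate the hypothesis to smaller tolerances, divide through by $\tmix(\mu_0,\eps)$, and observe that $\wmix(\mu_0,\eps)/\tmix(\mu_0,\eps)\to 0$. The paper's version is terser (it leaves the final extension from small $\eps$ to all $\eps\in(0,1)$ as an implicit monotonicity step), whereas you spell that last step out explicitly, but the argument is the same.
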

	Note that, in the standard setup where the initialization is a Dirac mass, our cutoff criterion \eqref{eq:prod-cond-cpi} reduces to the natural condition $\frac{\tmix(\mu_0,\eps)}{\cpi{\pi}}\to\infty$, which is known as  the \emph{product condition} in the classical literature on Markov chains   (see, e.g.,  \cite{salez2025modernaspectsmarkovchains}). 
\begin{remark}[Manifolds]We have here chosen to work in the Euclidean space $\R^d$ in order to keep the presentation simple and accessible. However, a careful inspection will convince the interested reader that our proof of Theorem \ref{thm:mix-window-LD} carries over to the more general setting of non-negatively curved diffusions on smooth complete weighted Riemannian manifolds. 
\end{remark}
As a second -- and genuinely new -- application of our transport approach to cutoff, we extend the above results to the Proximal Sampler, thereby tightening its relation to the Langevin dynamics. To lighten the formulas, we introduce the quantity
\begin{eqnarray*}
 \hatcpi{\pi} & \coloneqq & 1+\frac{\cpi{\pi}}{h},
 \end{eqnarray*}
 which, as we will see, can be seen as the natural discrete-time analogue of $\cpi{\pi}$.

\begin{theorem}[Mixing window of the Proximal Sampler]
\label{thm:mix-window-proximal sampler}
The Proximal Sampler with  log-concave target $\pi$, arbitrary initialization $\mu_0\in\pp\tond*{\R^d}$  and step size $h>0$   satisfies
		\begin{eqnarray*}
\wmix(\mu_0,\eps) & \leq & \frac{6}{\eps}\tond*{\hatcpi{\pi} + \sqrt{\hatcpi{\pi}\hatcpi{\mu_0}}+ \sqrt{\hatcpi{\pi}\tmix(\mu_0,1-\eps)}},
		\end{eqnarray*}
for any precision $\eps \in \tond*{0,\frac12}$.
\end{theorem}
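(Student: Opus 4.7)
The plan is to transpose the argument used for Theorem \ref{thm:mix-window-LD} into the discrete-time Proximal Sampler setting. The two main ingredients will be the discrete-time parabolic regularization estimate \eqref{eq:KL-W2-reg-prox-sampl} of \cite{che-che-sal-wib-2022}, which plays the exact role of \eqref{eq:KL-W2-reg}, and a W-TV transport inequality tailored to the Proximal Sampler, which should be the discrete analogue of the new W-TV inequality introduced earlier in the paper, with $\cpi{\pi}$ replaced by its discrete counterpart $\hatcpi{\pi}$. The emergence of $\hatcpi{\pi} = 1 + \cpi{\pi}/h$ is natural, since each forward step convolves with a Gaussian of variance $h$, so $\sqrt h$ is an unavoidable resolution scale even when $\cpi{\pi}$ is smaller than $h$.

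Concretely, set $k_0 := \tmix(\mu_0, 1-\eps)$ and $\nu := \mu_{k_0}$, so that $\totvar{\nu}{\pi} \le 1-\eps$ by definition. The first step is to bound $W^2(\nu,\pi)$ through the discrete W-TV inequality applied at $\nu$; the expected outcome is an estimate of the form
\[
W^2(\nu,\pi) \;\leq\; C\,h\bigl(\hatcpi{\pi} + \sqrt{\hatcpi{\pi}\hatcpi{\mu_0}} + \sqrt{\hatcpi{\pi}\,k_0}\bigr),
\]
where the last summand captures the potential growth of $W^2(\mu_0,\pi)$ under the chain, via a triangle-inequality argument bounding $W(\nu,\pi)$ by $W(\nu,\mu_0) + W(\mu_0,\pi)$ together with a one-step Wasserstein-displacement estimate. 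The second step applies \eqref{eq:KL-W2-reg-prox-sampl} from $\nu$: for any further $m\ge 1$,
\[
\kldiv{\mu_{k_0+m}}{\pi} \;\leq\; \frac{W^2(\nu,\pi)}{m h}.
\]
Substituting the W-TV bound and converting back to total variation via Pinsker's inequality determines the required $m$ and yields the claimed mixing-window estimate; the constant $6$ (as opposed to the $3$ of Theorem \ref{thm:mix-window-LD}) reflects the factor $1/h$ versus $1/(4t)$ in the two parabolic regularizations.

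The main obstacle lies in the first step. The continuous-time W-TV inequality behind Theorem \ref{thm:mix-window-LD} is presumably derived by differentiating $W^2(\mu_t,\pi)$ along the Langevin flow, a route not directly available here because of the Proximal Sampler's alternating two-step Gibbs structure. The forward step, being a pure Gaussian convolution, can be handled by standard heat-kernel estimates, but the backward step requires a more delicate analysis — for instance by transferring Poincaré information from $\pi$ to the joint law $\pib$ and back via projection. Obtaining the correct prefactor $\hatcpi{\pi}$ — rather than the naive $\cpi{\pi}/h$ — in both the intrinsic and initialization terms, and ensuring that the Gaussian-smoothing scale $\sqrt h$ does not spoil the bound, is the main technical challenge.
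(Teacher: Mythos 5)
Your high-level plan (transpose the Langevin argument, use the discrete parabolic regularization \eqref{eq:KL-W2-reg-prox-sampl}) is the right one, but the proposal misses the two lemmas that actually make the argument close, and the route you sketch for the Wasserstein bound would not work.

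First, no ``discrete analogue'' of the W-TV inequality is needed: Theorem \ref{th:WTV} is a purely static statement about two arbitrary probability measures and their Poincar\'e constants, proved via Lemma \ref{lm:Wchisq} and an interpolation argument (Lemma \ref{lm:interp}), not by differentiating $W^2(\mu_t,\pi)$ along any flow. It applies verbatim to $\mu_{k_0}$ and $\pi$, giving $W^2(\mu_{k_0},\pi)\le 4\bigl(\cpi{\mu_{k_0}}+\cpi{\pi}\bigr)/\eps$. The quantity $\hatcpi{\pi}$ only enters afterwards, through the elementary observation that $\cpi{\pi}/h\le\hatcpi{\pi}$ once the parabolic regularization introduces a factor $1/(kh)$. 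Your alternative suggestion --- a triangle inequality $W(\nu,\pi)\le W(\nu,\mu_0)+W(\mu_0,\pi)$ plus one-step displacement estimates --- cannot work, since $W(\mu_0,\pi)$ is unbounded and is exactly the quantity one is trying to avoid; the entire point of the W-TV inequality is to get a Wasserstein bound at time $k_0$ that does not involve the initial Wasserstein distance. Relatedly, you have the shape of the intermediate bound wrong: the $\sqrt{\cdot}$ terms in the final estimate do not appear in the $W^2$ bound itself but emerge only later, from optimizing over the split between the parabolic-regularization phase and the terminal fast-mixing phase.

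Second, you are missing the two ingredients that the paper proves for the Proximal Sampler. One is Lemma \ref{lm:local-concentration-PI}, the local Poincar\'e inequality $\cpi{\mu_k}\le\cpi{\mu_0}+2kh$, which is genuinely new and nontrivial (the paper proves it via the stochastic interpolation of \cite{che-che-sal-wib-2022} together with an Euler--Maruyama discretization and the contractivity of the gradient-descent map); it is needed to control $\cpi{\mu_{k_0}}$. The other is Lemma \ref{lem:fast-tv-mix-small-kl-proximal}, the discrete analogue of \eqref{eq:sal-fast-tv-mixing-small-kl}. Using plain Pinsker in place of that lemma, as you propose, gives $\wmix(\mu_0,\eps)\lesssim\bigl(\cpi{\mu_0}+k_0 h+\cpi{\pi}\bigr)/(h\eps^3)$, which scales like $k_0/\eps^3$ and is of the same order as $\tmix$ itself --- too weak to prove the theorem, let alone cutoff. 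The paper's scheme instead runs $k$ extra steps of parabolic regularization, then invokes Lemma \ref{lem:fast-tv-mix-small-kl-proximal} at $\mu_{k_0+k}$, and optimizes over $k$; this two-phase optimization is what produces the $\sqrt{\hatcpi{\pi}\hatcpi{\mu_0}}$ and $\sqrt{\hatcpi{\pi}\tmix(\mu_0,1-\eps)}$ terms.
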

\begin{corollary}[Cutoff for the Proximal Sampler]\label{cor:cutoff-proximal-sampler}
	Consider the setup of Theorem \ref{thm:mix-window-proximal sampler}, but assume that the ambient dimension $d$, the target $\pi$, the initialization $\mu_0$,  and the step size $h$ now depend on an implicit parameter $n\in\mathbb N$, in such a way that 
	\begin{eqnarray*} \label{eq:modified-prod-cond-cpi}
		\frac{\tmix(\mu_0,\varepsilon)}{\hatcpi{\pi}+\sqrt{\hatcpi{\pi}\hatcpi{\mu_0}}} & \xrightarrow[n\to\infty]{} &  \infty,
	\end{eqnarray*}
for some fixed $\eps \in (0,1)$. Then a cutoff occurs, in the sense of \eqref{def:cutoff} above.
\end{corollary}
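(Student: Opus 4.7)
The plan is to follow the template suggested by Corollary \ref{cor:cutoff-LD}: convert the quantitative window estimate of Theorem \ref{thm:mix-window-proximal sampler} into an asymptotic statement via the hypothesis on $\tmix(\mu_0,\varepsilon_0)$. To lighten the notation, write $R \coloneqq \hatcpi{\pi} + \sqrt{\hatcpi{\pi}\,\hatcpi{\mu_0}}$ and $T(\alpha) \coloneqq \tmix(\mu_0,\alpha)$, all implicitly depending on $n$. Since $\hatcpi{\pi} \leq R$, Theorem \ref{thm:mix-window-proximal sampler} reads
$$T(\eps) - T(1-\eps) \;\leq\; \frac{6}{\eps}\,\Bigl(R + \sqrt{R\,T(1-\eps)}\,\Bigr), \qquad \eps \in (0,\tfrac12),$$
and the hypothesis reads $T(\varepsilon_0)/R \to \infty$. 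By monotonicity of $\alpha \mapsto T(\alpha)$, one may assume without loss of generality that $\varepsilon_0 \leq 1/2$ (otherwise replace $\varepsilon_0$ by $1-\varepsilon_0$, noting that $T(1-\varepsilon_0) \geq T(\varepsilon_0)$).

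The main step is a bootstrap extending the hypothesis from the single value $\varepsilon_0$ to the assertion $T(\beta)/R \to \infty$ for \emph{every} $\beta \in (0,1)$. The key observation is that, for any $\eps \in (0,1/2)$, the condition $T(\eps)/R \to \infty$ forces $T(1-\eps) \geq T(\eps)/2$ eventually: otherwise the left-hand side of the window bound would exceed $T(\eps)/2$, while the right-hand side is at most $\frac{6}{\eps}\bigl(R + \sqrt{R\,T(\eps)}\bigr) = o(T(\eps))$ (since $R/T(\eps) \to 0$), a contradiction. Applied with $\eps = \varepsilon_0$, this yields $T(1-\varepsilon_0)/R \to \infty$, and monotonicity then covers every $\beta \in (0,\,1-\varepsilon_0]$. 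For $\beta \in (1-\varepsilon_0, 1)$, apply the observation with $\eps \coloneqq 1-\beta \in (0, \varepsilon_0)$: the premise $T(\eps)/R \to \infty$ holds because $T(1-\beta) \geq T(\varepsilon_0)$ by monotonicity, and the conclusion $T(\beta) = T(1-\eps) \geq T(\eps)/2$ then gives $T(\beta)/R \to \infty$.

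With the bootstrap in hand, the conclusion is immediate. For any fixed $\alpha \in (0,1/2)$, dividing the window bound by $T(1-\alpha)$ yields
$$\frac{T(\alpha)}{T(1-\alpha)} \;\leq\; 1 + \frac{6}{\alpha}\left(\frac{R}{T(1-\alpha)} + \sqrt{\frac{R}{T(1-\alpha)}}\,\right) \xrightarrow[n\to\infty]{} 1,$$
since $T(1-\alpha)/R \to \infty$ by the bootstrap. As this holds for every $\alpha \in (0,1/2)$ and $T(\alpha) \geq T(1/2) \geq T(1-\alpha)$, one obtains $T(\alpha)/T(1-\alpha') \to 1$ for arbitrary $\alpha, \alpha' \in (0,1)$, which is precisely the cutoff relation \eqref{def:cutoff}. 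The only substantive point is the bootstrap, and specifically the short contradiction argument that forces $T(1-\eps) \geq T(\eps)/2$; everything else is mechanical manipulation of the window inequality.
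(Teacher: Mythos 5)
Your proof is correct, and it is a more explicit and slightly more roundabout version of what the paper actually does. The paper's argument (written out only for Corollary~\ref{cor:cutoff-LD}, with the Proximal Sampler case left analogous) proceeds by noting that monotonicity of $\varepsilon\mapsto T(\varepsilon)$ extends the hypothesis $T(\varepsilon_0)/R\to\infty$ to all small $\varepsilon$, then \emph{divides the window bound by $T(\varepsilon)$} and uses $T(1-\varepsilon)\le T(\varepsilon)$ to obtain $\wmix(\mu_0,\varepsilon)/T(\varepsilon)\to 0$ directly, after which a standard sandwich argument gives cutoff at every level. You instead divide by $T(1-\alpha)$, which requires knowing a priori that $T(1-\alpha)/R\to\infty$ even when $1-\alpha>\tfrac12$; you supply this via your bootstrap, i.e.\ the contradiction argument showing that $T(\varepsilon)/R\to\infty$ forces $T(1-\varepsilon)\ge T(\varepsilon)/2$ eventually. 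This is an extra step that the paper's normalization avoids, but it is sound: the key estimate $\frac{6}{\varepsilon}\bigl(R+\sqrt{R\,T(\varepsilon)}\bigr)=o(T(\varepsilon))$ is exactly right, and the case-split over $\beta\lessgtr 1-\varepsilon_0$ correctly propagates the divergence to every level. What your version buys is that every implicit ``readily implies'' of the paper is spelled out; what it costs is one additional (if short) contradiction argument that the paper's choice of normalizer renders unnecessary. One cosmetic remark: in the final step you assert $T(\alpha)/T(1-\alpha')\to 1$ for \emph{all} pairs $\alpha,\alpha'\in(0,1)$, which is a little stronger than what \eqref{def:cutoff} asks and a little terser than its own justification deserves; it would be cleaner to simply note that $T(\alpha)/T(1-\alpha)\to 1$ for $\alpha\in(0,\tfrac12)$ and take reciprocals for $\alpha\in(\tfrac12,1)$.
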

Here again, the condition \eqref{eq:prod-cond-cpi} reduces to $ \frac{\tmix(\mu_0, \eps)}{\hatcpi{\pi}}\to\infty$  for  Dirac initializations. To the best of our knowledge, this is the very first result establishing cutoff for the Proximal Sampler. We emphasize that the latter is a discrete-time Markov process on a continuous state space, an object to which  the varentropy approach developed in \cite{sal-2023,sal-2024,sal-2025,ped-sal-2025,salez2025modernaspectsmarkovchains} does \emph{not} currently apply. Indeed, in that series of work,  varentropy is controlled using either the celebrated \emph{chain rule}, which notoriously fails in discrete time, or an approximate version of it involving a certain \emph{sparsity parameter}, which only makes sense on discrete   spaces. To bypass this limitation, our main idea is to replace the reverse Pinsker inequality \cite[Lemma 8]{sal-2023} where varentropy appears with the following 
W-TV transport inequality, which seems new and of independent interest. 
	
 	\begin{theorem}[W-TV transport inequality]\label{th:WTV}For any $\mu,\nu\in\pp\tond*{\R^d}$,
	\begin{eqnarray*}
	W^2(\mu,\nu)&  \le &\frac{ 4\left(\cpi{\mu}+\cpi{\nu}\right)\totvar{\mu}{\nu}}{1-\totvar{\mu}{\nu}}.
	\end{eqnarray*}
	\end{theorem}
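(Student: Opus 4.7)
The plan is to prove the inequality via the Benamou--Brenier representation of $W^2$ along the linear mixture path. Setting $\delta \coloneqq \totvar{\mu}{\nu}$ and $\rho_t \coloneqq (1-t)\mu + t\nu$ for $t \in [0,1]$, let $\phi_t$ solve the weighted Poisson equation $-\nabla\cdot(\rho_t \nabla\phi_t) = \nu - \mu$. Choosing $v_t = \nabla\phi_t$ as a velocity field in Benamou--Brenier yields
\[
W^2(\mu,\nu) \,\le\, \int_0^1 E_t\, dt, \qquad E_t \,\coloneqq\, \int \abs*{\nabla\phi_t}^2\, d\rho_t \,=\, \int \phi_t\, d(\nu-\mu),
\]
the second identity being integration by parts. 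Everything then reduces to a pointwise bound on the ``energy'' $E_t$.

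For this, normalize $\phi_t$ by $\int\phi_t\, d\rho_t = 0$, which forces $\int\phi_t\, d\mu = -tE_t$ and $\int\phi_t\, d\nu = (1-t)E_t$. Exploit the Jordan decomposition $\nu-\mu = \mu_- - \mu_+$, where the singular parts $\mu_\pm$ both have mass $\delta$ and satisfy $\mu_+ \le \mu$ and $\mu_- \le \nu$. Cauchy--Schwarz combined with these dominations gives
\[
\abs*{\int \phi_t\, d\mu_+} \,\le\, \sqrt{\delta}\,\norm*{\phi_t}_{L^2(\mu)}, \qquad \abs*{\int \phi_t\, d\mu_-} \,\le\, \sqrt{\delta}\,\norm*{\phi_t}_{L^2(\nu)}.
\]
Now split $\norm*{\phi_t}_{L^2(\mu)}^2 = \var_\mu(\phi_t) + t^2 E_t^2$: the variance piece is controlled by Poincaré together with $\int\abs*{\nabla\phi_t}^2\, d\mu \le E_t/(1-t)$ (a direct consequence of $\rho_t \ge (1-t)\mu$), while the mean piece is exactly $t^2 E_t^2$. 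A symmetric argument applies to $\norm*{\phi_t}_{L^2(\nu)}^2$ using $\rho_t \ge t\nu$. Summing and solving the resulting quadratic inequality in $E_t$ produces the pointwise estimate
\[
E_t \,\le\, \frac{c\,\delta}{1-\delta}\left(\frac{\cpi{\mu}}{1-t}+\frac{\cpi{\nu}}{t}\right)
\]
for some absolute constant $c$.

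The last step is the time integration. A direct integration of this bound would diverge logarithmically at $t\in\{0,1\}$, so I would instead use a non-linear reparametrization $\rho_t = \rho^{\mathrm{lin}}_{s(t)}$ for a smooth increasing $s\colon[0,1]\to[0,1]$ that moves more slowly near the endpoints. The standard Cauchy--Schwarz optimization over such parametrizations reduces the time-integral to $\bigl(\int_0^1\sqrt{E_s}\, ds\bigr)^2$, which is finite because of the integrable $s^{-1/2}$ and $(1-s)^{-1/2}$ singularities. The elementary inequalities $\sqrt{a+b}\le \sqrt{a}+\sqrt{b}$ and $(\sqrt{a}+\sqrt{b})^2 \le 2(a+b)$ then collapse the bound to the form stated in the theorem, up to an absolute constant. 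The main obstacle will be a sharp tracking of constants in order to recover the precise coefficient $4$ in the numerator: the chain above accumulates extraneous factors of two from the two Cauchy--Schwarz steps and from the elementary square-root inequalities, so obtaining the optimal constant should require a more careful centering of $\phi_t$ (perhaps symmetric with respect to both $\mu$ and $\nu$) together with an exact trigonometric evaluation of the reparametrization integral $\int_0^1\sqrt{\cpi{\mu}/(1-s)+\cpi{\nu}/s}\, ds$.
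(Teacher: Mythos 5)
Your approach---a direct Benamou--Brenier estimate along the linear mixture path $\rho_t=(1-t)\mu+t\nu$---is genuinely different from the paper's. The paper invokes as a black box the transport--variance inequality $W^2(\mu,\nu)\le 2\cpi{\mu}\chisq{\nu}{\mu}$ of Liu (Lemma~\ref{lm:Wchisq}), constructs an interpolating measure $\lambda$ proportional to $\relden{\mu}{\sigma}\wedge\relden{\nu}{\sigma}$ whose densities with respect to both $\mu$ and $\nu$ are bounded by $\tond*{1-\totvar{\mu}{\nu}}^{-1}$ (Lemma~\ref{lm:interp}), and then simply combines the triangle inequality $W^2(\mu,\nu)\le 2W^2(\mu,\lambda)+2W^2(\lambda,\nu)$ with the crude bound $\chisq{\lambda}{\mu}\le\|\relden{\lambda}{\mu}\|_\infty-1$. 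That route is about five lines and entirely measure-theoretic; yours is self-contained but longer, and requires solvability of the weighted Poisson equation.

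There is, however, a concrete gap in your pointwise energy bound, and even once it is repaired the route lands on a weaker constant. Write $\delta\coloneqq\totvar{\mu}{\nu}$. With the normalization $\int\phi_t\,d\rho_t=0$ that you propose, the two means $\int\phi_t\,d\mu=-tE_t$ and $\int\phi_t\,d\nu=(1-t)E_t$ are \emph{not} both negligible: after Cauchy--Schwarz and $\sqrt{x+y}\le\sqrt x+\sqrt y$ the self-energy pieces $tE_t$ and $(1-t)E_t$ recombine into a full $E_t$ on the right, so one is left with $(1-\sqrt\delta)\,E_t\le\sqrt\delta\bigl(\sqrt{\variwr{\mu}{\phi_t}}+\sqrt{\variwr{\nu}{\phi_t}}\bigr)$, whose solution carries the prefactor $\delta/(1-\sqrt\delta)^2$, not the claimed $c\,\delta/(1-\delta)$. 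The former is strictly larger for all $\delta\in(0,1)$ and blows up like $(1-\delta)^{-2}$ rather than $(1-\delta)^{-1}$ as $\delta\to1$. To actually obtain the $\delta/(1-\delta)$ dependence you must optimize the additive constant in $\phi_t$: the minimum of $\sqrt{a^2+\variwr{\mu}{\phi_t}}+\sqrt{b^2+\variwr{\nu}{\phi_t}}$ subject to $b-a=E_t$ equals $\sqrt{E_t^2+\bigl(\sqrt{\variwr{\mu}{\phi_t}}+\sqrt{\variwr{\nu}{\phi_t}}\bigr)^2}$ by Minkowski, and squaring then gives a clean quadratic in $E_t$ in which the self-energy separates off as $\delta E_t^2$. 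This is precisely the ``symmetric centering'' you gesture at; it needs to be spelled out, since the normalization you actually state does not do the job. With that fix, the reparametrization step works as you describe and the $s^{-1/2}$ singularities integrate to
\begin{equation*}
W^2(\mu,\nu)\ \le\ \frac{4\,\totvar{\mu}{\nu}}{1-\totvar{\mu}{\nu}}\tond*{\sqrt{\cpi{\mu}}+\sqrt{\cpi{\nu}}}^2.
\end{equation*}
This is generically \emph{weaker} than the theorem --- when $\cpi{\mu}=\cpi{\nu}$ it is off by a factor of $2$ --- and no exact evaluation of the reparametrization integral will recover the constant $4$: the cross term $2\sqrt{\cpi{\mu}\cpi{\nu}}$ is intrinsic to controlling $\variwr{\mu}{\cdot}$ and $\variwr{\nu}{\cdot}$ simultaneously along a single mixture path, whereas the paper's split through the third measure $\lambda$ pays its factor of $2$ once, via the elementary triangle inequality, and produces the additive form $\cpi{\mu}+\cpi{\nu}$.
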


	\section{Proofs}
\subsection{The W-TV transport inequality}

In this section, we prove Theorem \ref{th:WTV}. Given two probability measures $\mu,\nu \in \pp \tond*{\R^d}$, we recall that the chi-squared divergence of $\nu$ w.r.t. $\mu$ is defined by the formula
\begin{eqnarray*}
\chisq{\nu}{\mu} & \coloneqq & \int \tond*{\relden{\nu}{\mu}-1} \mathrm{d}\nu,
\end{eqnarray*}
with  $\chisq{\nu}{\mu}=\infty$ if $\nu$ is not absolutely continuous w.r.t. $\mu$. Our starting point is the following  transport-variance inequality, whose proof can be found in \cite{liu-2020}.
\begin{lemma}[Transport-variance inequality] \label{lm:Wchisq}For any $\mu,\nu \in \pp \tond*{\R^d}$,
	\begin{eqnarray*}
			W^2(\mu,\nu) & \leq & 2\cpi{\mu} \chisq{\nu}{\mu}.
	\end{eqnarray*}
\end{lemma}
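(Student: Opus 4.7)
The plan is to deduce this from a Peyre-type negative-Sobolev bound for $W^2$, combined with the Poincaré inequality for $\mu$. Assuming $\nu \ll \mu$ (else the right-hand side is infinite) and setting $f := \relden{\nu}{\mu}$, one has $\chisq{\nu}{\mu} = \|f-1\|_{L^2(\mu)}^2$. Let $L_\mu$ denote the weighted Laplacian associated with $\mu$, characterized by the integration-by-parts identity $\int \phi\, L_\mu \psi\, d\mu = -\int \nabla \phi \cdot \nabla \psi\, d\mu$ for smooth test functions, and let $\phi$ denote the unique mean-zero solution to the Poisson equation $L_\mu \phi = f-1$ (the right-hand side is $\mu$-mean-zero, and Poincaré makes $L_\mu$ invertible on the orthogonal complement of the constants).

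The heart of the argument is the intermediate Peyre-type bound
\begin{eqnarray*}
W^2(\mu,\nu) & \leq & 2 \int |\nabla \phi|^2\, d\mu.
\end{eqnarray*}
To obtain it, I would feed the Benamou--Brenier dynamical formulation $W^2(\mu,\nu) = \inf \int_0^1 \int |v_s|^2\, d\rho_s\, ds$, with the infimum taken over smooth curves $(\rho_s, v_s)_{s\in[0,1]}$ solving the continuity equation with $\rho_0 = \mu$ and $\rho_1 = \nu$, using the linear interpolation $\rho_s = (1-s)\mu + s\nu$ together with a velocity field constructed from $\phi$. The identity $\partial_s \rho_s = (f-1)\mu = (L_\mu \phi)\,\mu$ signals that mass must be transported along $\nabla\phi$; a careful computation of the kinetic energy along this curve, using integration by parts, should give the sharp factor of $2$.

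Given the Peyre-type bound, the rest is immediate. Integration by parts yields $\int|\nabla\phi|^2\, d\mu = -\int \phi(f-1)\, d\mu$. Since $\phi$ is mean-zero under $\mu$, Cauchy--Schwarz followed by the Poincaré inequality for $\mu$ gives
\begin{eqnarray*}
\int|\nabla\phi|^2\, d\mu & = & -\int \phi(f-1)\, d\mu \ \leq \ \sqrt{\variwr{\mu}{\phi}\,\chisq{\nu}{\mu}} \ \leq \ \sqrt{\cpi{\mu}\,\chisq{\nu}{\mu}\,\int|\nabla\phi|^2\, d\mu},
\end{eqnarray*}
so $\int|\nabla\phi|^2\, d\mu \leq \cpi{\mu}\,\chisq{\nu}{\mu}$, and combining with the Peyre-type bound closes the argument.

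The main obstacle is establishing this intermediate bound with the sharp constant $2$. The linear interpolation carries a weight $h_s := 1 + s(f-1)$ that can vanish where $f=0$, and the naive choice $v_s \propto \nabla\phi/h_s$ produces the singular factor $\int_0^1 ds/h_s = \log f/(f-1)$ instead of a clean constant. Circumventing this requires a smarter velocity field or a different interpolation, and this is the subtle geometric ingredient at the heart of the proof.
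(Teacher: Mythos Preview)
The paper does not prove this lemma; it cites \cite{liu-2020}. Your route via a Peyre-type negative-Sobolev bound followed by Poincar\'e is natural, and your second step $\int|\nabla\phi|^2\,d\mu \le \cpi{\mu}\,\chisq{\nu}{\mu}$ is correct. The gap is precisely where you locate it: the inequality $W^2(\mu,\nu)\le 2\int|\nabla\phi|^2\,d\mu$ that you need is \emph{stronger} than what Peyre's theorem actually provides, namely $W^2(\mu,\nu)\le 4\int|\nabla\phi|^2\,d\mu$. With Peyre's actual constant your two-step argument delivers $W^2(\mu,\nu)\le 4\,\cpi{\mu}\,\chisq{\nu}{\mu}$, off by a factor of $2$ from the stated lemma, and this loss appears to be intrinsic to factoring the estimate through the intermediate negative-Sobolev norm. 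The proof in \cite{liu-2020} avoids it by a dynamical argument along the reversible semigroup $(P_t)$ with invariant measure $\mu$: one bounds the Wasserstein length of the curve $t\mapsto (P_t f)\mu$ directly, exploiting the exponential $\chi^2$-decay granted by the Poincar\'e inequality, and obtains the constant $2$ without an intermediate Peyre step. For the present paper the missing factor is harmless: replacing $2$ by $4$ in the lemma merely doubles the constant in Theorem~\ref{th:WTV}, and the mixing-window estimates and cutoff corollaries survive with slightly worse absolute constants.
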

Unfortunately, the chi-squared divergence appearing here could be arbitrarily large compared to the total-variation term with which we seek to control $W^2(\mu,\nu)$. To preclude such pathologies, we introduce a  probability measure $\lambda$ which  \emph{interpolates nicely} between  $\mu$ and $\nu$, in the sense of having small Radon-Nikodym derivatives w.r.t. both. 
\begin{lemma}[Interpolation]\label{lm:interp} Fix two probability measures $\mu,\nu$ on a measurable space, with $\totvar{\mu}{\nu}<1$. Then, there exists a  probability measure $\lambda$ which is absolutely continuous with respect to both $\mu$ and $\nu$, such that
	\begin{eqnarray*}
		\left\|\relden{\lambda}{\mu}\right\|_\infty\vee\ \ \left\|\relden{\lambda}{\nu}\right\|_\infty	 & \leq & \frac{1}{1-\totvar{\mu}{\nu}}.
	\end{eqnarray*}
\end{lemma}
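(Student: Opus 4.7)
The natural candidate is the normalized ``common part'' of $\mu$ and $\nu$, namely the measure appearing implicitly in the maximal coupling characterization of total variation. Concretely, the plan is to fix a common dominating measure $\sigma$ (for instance $\sigma = \mu + \nu$), let $f \coloneqq \mathrm{d}\mu/\mathrm{d}\sigma$ and $g \coloneqq \mathrm{d}\nu/\mathrm{d}\sigma$, and then set
\begin{equation*}
\lambda \coloneqq \frac{1}{Z} (f \wedge g) \cdot \sigma, \qquad Z \coloneqq \int (f\wedge g)\,\mathrm{d}\sigma,
\end{equation*}
provided $Z>0$. This construction makes the crucial upper bounds on the densities essentially tautological.

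The first step is to identify $Z$. Starting from the standard identity
\begin{equation*}
\totvar{\mu}{\nu} \;=\; \frac{1}{2}\int |f-g|\,\mathrm{d}\sigma \;=\; \int (f-g)_+\,\mathrm{d}\sigma,
\end{equation*}
where the second equality uses $\int f\,\mathrm{d}\sigma = \int g\,\mathrm{d}\sigma = 1$, and the elementary decomposition $f = (f\wedge g) + (f-g)_+$, I would conclude
\begin{equation*}
Z \;=\; \int f\,\mathrm{d}\sigma - \int (f-g)_+\,\mathrm{d}\sigma \;=\; 1 - \totvar{\mu}{\nu} \;>\; 0,
\end{equation*}
which in particular guarantees that $\lambda$ is a well-defined probability measure under the hypothesis $\totvar{\mu}{\nu}<1$.

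The second step is to read off the Radon--Nikodym derivatives. Wherever $f>0$ we have
\begin{equation*}
\relden{\lambda}{\mu} \;=\; \frac{f\wedge g}{Z\,f} \;\leq\; \frac{1}{Z} \;=\; \frac{1}{1-\totvar{\mu}{\nu}},
\end{equation*}
and symmetrically $\mathrm{d}\lambda/\mathrm{d}\nu \le 1/(1-\totvar{\mu}{\nu})$ wherever $g>0$. Since $f\wedge g$ vanishes on $\{f=0\}$ and on $\{g=0\}$, the measure $\lambda$ is absolutely continuous with respect to both $\mu$ and $\nu$, which finishes the proof.

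There is no real obstacle here: the whole argument is bookkeeping around the well-known minimum-measure construction underlying the coupling representation of total variation. The only minor point to keep straight is the choice of the dominating measure $\sigma$ and the almost-everywhere nature of the density bounds, but these are purely technical.
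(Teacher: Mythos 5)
Your proposal is correct and follows essentially the same construction as the paper's proof: take $\sigma=\mu+\nu$, form the minimum density $f\wedge g$, normalize by $1-\totvar{\mu}{\nu}$, and read off the resulting Radon--Nikodym derivative bounds. The only cosmetic difference is that you derive the normalizing constant $Z=1-\totvar{\mu}{\nu}$ from the $\tfrac12\int|f-g|\,\mathrm{d}\sigma$ formula rather than quoting the identity $\totvar{\mu}{\nu}=1-\int(f\wedge g)\,\mathrm{d}\sigma$ directly.
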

\begin{proof}
Fix a measure $\sigma$ with respect to which both  $\mu$ and $\nu$ are absolutely continuous (e.g., $\sigma:=\mu+\nu$), and let $f:=\relden{\mu}{\sigma}$ and $g:=\relden{\nu}{\sigma}$ denote the corresponding Radon-Nikodym derivatives. With this notation at hand, we classically have the integral representation
\begin{eqnarray*}
\totvar{\mu}{\nu} & = & 1-\int \tond*{f \wedge g}\, {\mathrm d}\sigma.
\end{eqnarray*}
Consequently, we can define a probability measure $\lambda$ by the formula
\begin{eqnarray*}
{\mathrm d}\lambda & := & \frac{f\wedge g}{1-\totvar{\mu}{\nu}}\,\mathrm{d}{\sigma}.
\end{eqnarray*}
This measure is absolutely continuous w.r.t. both $\mu$ and $\nu$, because $\lambda(A)\le \frac{\mu(A)\wedge \nu(A)}{1-\totvar{\mu}{\nu}},$ for every measurable set $A$. Moreover, the corresponding Radon-Nikodym derivatives are
\begin{eqnarray*}
\relden{\lambda}{\mu} \ = \ \frac{1\wedge \frac g f}{1-\totvar{\mu}{\nu}},& \textrm{ and } & 
\relden{\lambda}{\nu} \ = \ \frac{1\wedge \frac f g}{1-\totvar{\mu}{\nu}},
\end{eqnarray*}
those formulae being interpreted as zero outside  $\supp(\lambda):=\{f\wedge g>0\}$. 
\end{proof}
We now have everything we need to establish Theorem \ref{th:WTV}. 
\begin{proof}[Proof of Theorem \ref{th:WTV}]Fix $\mu,\nu\in\pp \tond*{\R^d}$ 
 and let $\lambda$ be as in Lemma \ref{lm:interp}. Then,
\begin{eqnarray*}
W^2(\mu,\nu) & \le & 2W^2(\mu,\lambda)+2W^2(\nu,\lambda)\\
& \le & 4\cpi{\mu}\chisq{\lambda}{\mu}+4\cpi{\nu}\chisq{\lambda}{\nu},
\end{eqnarray*}
by the triangle inequality and Lemma \ref{lm:Wchisq}. On the other hand, we have the crude bound
\begin{eqnarray*}
\chisq{\lambda}{\mu} & \le & \left\|\relden{\lambda}{\mu}\right\|_\infty-1 
\ \le \ \frac{\totvar{\mu}{\nu}}{1-\totvar{\mu}{\nu}},
\end{eqnarray*}
by Lemma \ref{lm:interp}, and similarly with $\nu$ instead of $\mu$. 
\end{proof}

	\subsection{Cutoff for the Langevin dynamics}

	In this section, we prove  Theorem \ref{thm:mix-window-LD} and Corollary \ref{cor:cutoff-LD}. Consider the Langevin dynamics  \eqref{eq:LD} with  target  $\pi$ and  initialization $\mu_0\in\pp\tond*{\R^d}$, and write $\mu_t = \law(X_t)$ for the  law at time $t\ge 0$. As is well known from the Bakry--\'Emery theory \cite{bak-gen-led-2014} (see Remark \ref{rk:extensions} below for an alternative proof), the log-concavity of $\pi$ ensures the local Poincaré inequality
\begin{eqnarray}
\label{eq:LPI}
\forall t\ge 0,\quad	\cpi{\mu_t} & \le & \cpi{\mu_0}+2t.
	\end{eqnarray}
Another ingredient that we will need is the basic mixing-time estimate
\begin{eqnarray}
\label{eq:sal-fast-tv-mixing-small-kl}
\tmixa{\mu_0,\eps} & \leq & \frac{\cpi{\pi}\tond*{1+\kldiv{\mu_0}{\pi}}}{\eps},
\end{eqnarray}
borrowed from \cite[Lemma 7]{sal-2023}, and which relies on the classical fact that
	\begin{eqnarray*}
\forall t\ge 0,\quad 	\chisq{\mu_t}{\pi}  & \le & e^{-2\cpi{\pi}t}	\chisq{\mu_0}{\pi},
	\end{eqnarray*}
together with an easy interpolation argument between $\chisq{\mu_t}{\pi},\kldiv{\mu_t}{\pi}$ and $\totvar{\mu_t}{\pi}$. 
	\begin{proof}[Proof of Theorem \ref{thm:mix-window-LD}]
Fix  $\eps\in \tond*{0,\frac12}$ and set $t_0 := \tmix(\mu_0,1-\eps)$. By the very definition of $t_0$, our W-TV transport inequality (Theorem \ref{th:WTV}) gives
\begin{eqnarray*}
W^2(\mu_{t_0},\pi) & \leq & \frac{4\cpi{\pi}+4\cpi{\mu_{t_0}}}{\varepsilon}.
\end{eqnarray*}
Therefore, the parabolic regularization estimate  \eqref{eq:KL-W2-reg} applied to $\mu_{t_0}$ instead of $\mu_0$ yields
\begin{eqnarray*}
\forall s\ge 0,\quad \kldiv{\mu_{t_0+s}}{\pi} & \leq & \frac{\cpi{\pi}+\cpi{\mu_{t_0}}}{s \eps}.
\end{eqnarray*}
On the other hand, applying \eqref{eq:sal-fast-tv-mixing-small-kl} to $\mu_t$ instead of $\mu_0$ ensures that
\begin{eqnarray*}
\tmix(\mu_0,\varepsilon) & \leq & t+ \frac{\cpi{\pi}\tond*{1+\kldiv{\mu_t}{\pi}}}{\eps},
\end{eqnarray*}
 for any $t\ge 0$. Choosing $t=t_0+s$ and combining this with the previous line, we obtain
\begin{eqnarray*}
\wmix(\mu_0,\varepsilon) & \le & s+\frac{\cpi{\pi}}{\eps}+\frac{C_{\mathrm P}^2(\pi)+\cpi{\pi}\cpi{\mu_{t_0}}}{s \eps^2}.
\end{eqnarray*}
Since this bound is valid for any $s\ge 0$, we may finally optimize on $s$ to conclude that
\begin{eqnarray*}
\wmix(\mu_0,\varepsilon) & \le & \frac{\cpi{\pi}}{\eps}+\frac{2}{\eps}\sqrt{C_{\mathrm P}^2(\pi)+\cpi{\pi}\cpi{\mu_{t_0}}}.
\end{eqnarray*}
This implies the desired estimate, thanks to \eqref{eq:LPI} and  the subadditivity of $\sqrt{\cdot}$. 
	\end{proof}
		\begin{proof}[Proof of Corollary \ref{cor:cutoff-proximal-sampler}]
We now let the ambient dimension $d$, the target $\pi$ and the initialization $\mu_0$ depend on an implicit parameter $n\in \mathbb N$, in such a way that the condition (\ref{eq:prod-cond-cpi}) holds for some $\varepsilon\in(0,1)$. Since $\tmix(\mu_0,\varepsilon)$ is a non-increasing function of $\varepsilon$,  (\ref{eq:prod-cond-cpi}) must in fact holds for every small enough $\varepsilon>0$, and Theorem \ref{thm:mix-window-LD} then readily implies that 
	\begin{eqnarray}
 \frac{\wmix(\mu_0,\varepsilon)}{\tmix(\mu_0,\varepsilon)} & \xrightarrow[n\to\infty]{} & 0. 
	\end{eqnarray}
Since this holds for every small enough $\varepsilon>0$, the cutoff phenomenon (\ref{def:cutoff}) follows. 
		\end{proof}
	\subsection{Cutoff for the Proximal Sampler}
To prove Theorem \ref{thm:mix-window-proximal sampler}, we  fix a log-concave target $\pi\in\pp\tond*{\R^d}$, a step size $h>0$, and an initialization $\mu_0\in\pp\tond*{\R^d}$, and we consider the random sequence  $X_0,Y_0,X_1,Y_1,\ldots$  generated by the Proximal Sampler \eqref{eq:forward-step-prox}-\eqref{eq:backward-step-prox}. We write $\mu_k=\mathrm{law}(X_k)$. The main ingredient we need in order to mimic the proof of Theorem \ref{thm:mix-window-LD} is a version of the local Poincaré inequality \eqref{eq:LPI} for the Proximal Sampler, provided in the following lemma.
	\begin{lemma}[local Poincaré inequality for the Proximal Sampler]\label{lm:local-concentration-PI}We have
	\begin{eqnarray*}
\forall k\in\mathbb N,\quad \cpi{\mu_k} & \le & \cpi{\mu_0} + 2kh.
		\end{eqnarray*}
	\end{lemma}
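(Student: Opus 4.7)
The plan is to prove the one-step estimate $\cpi{\mu_{k+1}} \le \cpi{\mu_k}+2h$ and iterate from the trivial base case $k=0$. I factor each iteration through the intermediate law $\nu_k \coloneqq \law(Y_k) = \mu_k * \gamma_h$ and show that the forward step $\mu_k \mapsto \nu_k$ and the backward step $\nu_k \mapsto \mu_{k+1}$ each increase the Poincaré constant by at most $h$.

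For the forward step, I verify $\cpi{\mu*\gamma_h}\le\cpi{\mu}+h$ by writing $Y = X+W$ with $X\sim\mu$ and $W\sim\gamma_h$ independent, then applying the law of total variance: the Gaussian Poincaré inequality (constant $h$) controls $\mathrm{Var}_W(f(X+W))$ pointwise, while $\cpi{\mu}$ handles the conditional expectation $x\mapsto \int f(x+w)\gamma_h(\mathrm{d}w)$; commutation of the gradient with Gaussian convolution together with Jensen's inequality consolidate the latter into $\int|\nabla f|^2\,\mathrm{d}(\mu*\gamma_h)$.

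For the backward step, I apply the law of total variance to the joint $(X_{k+1},Y_k)$, which has $Y$-marginal $\nu_k$ and conditional $X_{k+1}\mid Y_k \sim \pib(\cdot\mid Y_k)$: setting $g(y) \coloneqq \int f\,\mathrm{d}\pib(\cdot\mid y)$,
\begin{eqnarray*}
\variwr{\mu_{k+1}}{f} & = & \expewr{\nu_k}{\mathrm{Var}(f(X_{k+1}) \,|\, Y_k)} + \variwr{\nu_k}{g}.
\end{eqnarray*}
Since $\pib(\cdot\mid y) \propto \expo{-V(x) - |x-y|^2/(2h)}$ is $(1/h)$-strongly log-concave by convexity of $V$, Bakry--\'Emery bounds its Poincaré constant by $h$, so the first term is at most $h\int|\nabla f|^2\,\mathrm{d}\mu_{k+1}$. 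The second term is at most $\cpi{\nu_k}\int|\nabla g|^2\,\mathrm{d}\nu_k$ by definition.

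The main obstacle is the pointwise gradient contraction $|\nabla g(y)|^2 \le \condexpe{|\nabla f(X_{k+1})|^2}{Y_k = y}$, from which $\int|\nabla g|^2\,\mathrm{d}\nu_k \le \int|\nabla f|^2\,\mathrm{d}\mu_{k+1}$ follows by integration and the duality $\int g\,\mathrm{d}\nu_k = \int f\,\mathrm{d}\mu_{k+1}$. I would prove it via a synchronous Langevin coupling: for nearby $y,y'$, run the overdamped Langevin dynamics with potentials $V(\cdot)+|\cdot-y|^2/(2h)$ and $V(\cdot)+|\cdot-y'|^2/(2h)$ driven by a common Brownian motion. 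Convexity of $V$ combined with the $1/h$-strong convexity contributed by the quadratic term gives $\frac{\mathrm{d}}{\mathrm{d}t}|X_t^y-X_t^{y'}| \le -|X_t^y-X_t^{y'}|/h + |y-y'|/h$, whence in the stationary coupling $|X^y - X^{y'}| \le |y-y'|$ almost surely. Letting $y' \to y$ yields a vector field $v_\xi$ on $\R^d$ with $|v_\xi| \le |\xi|$ such that $\nabla g(y)\cdot\xi = \condexpe{\nabla f(X_{k+1})\cdot v_\xi(X_{k+1})}{Y_k = y}$, and Cauchy--Schwarz delivers the desired pointwise bound. Assembling the estimates, $\cpi{\mu_{k+1}} \le h + \cpi{\nu_k} \le h + (\cpi{\mu_k}+h) = \cpi{\mu_k}+2h$, and induction on $k$ closes the proof.
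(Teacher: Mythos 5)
Your argument is correct, but it takes a genuinely different route from the paper's for the backward step. The paper handles $\cpi{X_1}\le\cpi{Y_0}+h$ dynamically: it identifies $X_1$ with the terminal value of the backward heat-flow SDE from \cite{che-che-sal-wib-2022}, tracks $\cpi{\cdot}$ along its Euler--Maruyama discretization using convolution sub-additivity and the $1$-Lipschitz property of the gradient-descent maps $x\mapsto x+\delta\nabla\log f_t(x)$, and then removes the log-smoothness hypothesis (which the $1$-Lipschitz bound requires) by Gaussian mollification of $\pi$. You instead handle the backward step statically: the law of total variance decomposes $\variwr{\mu_{k+1}}{f}$ through the conditional kernel $\pib(\cdot\mid y)$ -- whose Poincar\'e constant is at most $h$ by $(1/h)$-strong log-concavity -- and the key remaining step is the pointwise gradient contraction $|\nabla g(y)|^2\le\condexpe{|\nabla f(X_{k+1})|^2}{Y_k=y}$ for the ``super-mean'' $g(y)=\int f\,\mathrm{d}\pib(\cdot\mid y)$, which you obtain from the synchronous Langevin coupling showing $W_\infty\bigl(\pib(\cdot\mid y),\pib(\cdot\mid y')\bigr)\le|y-y'|$. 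A notable advantage of your route is that the quadratic term supplies the full $1/h$ contraction rate with \emph{no} upper bound on $\nabla^2 V$, so the log-smoothness assumption and the ensuing mollification step are never needed. One small polish: the passage via the derivative vector field $v_\xi$ of the coupling is more than you need, and its existence is somewhat delicate; it is cleaner to argue by finite differences -- for any coupling with $|x-x'|\le|y-y'|$ a.s., one has $|g(y')-g(y)|\le\int|\nabla f(z_{x,x'})|\,|x'-x|\,\mathrm{d}\Pi\le|y'-y|\int|\nabla f(z_{x,x'})|\,\mathrm{d}\Pi$, and letting $y'\to y$ (dominated convergence, then Jensen) gives $|\nabla g(y)|^2\le\int|\nabla f|^2\,\mathrm{d}\pib(\cdot\mid y)$ directly. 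What the paper's dynamic approach buys, as Remark~\ref{rk:extensions} notes, is a uniform template that transfers to local log-Sobolev inequalities and to the Euler--Maruyama scheme essentially verbatim; what yours buys is elementarity and the avoidance of any regularization.
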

	\begin{proof}
Let us use the convenient short-hand $\cpi{U}:=\cpi{\mathrm{law}(U)}$ when $U$ is a $\R^d$-valued random variable. By induction, it is enough to prove the claim when $k=1$, i.e.
\begin{eqnarray*}
\cpi{X_1} & = & \cpi{X_0}+2h.
\end{eqnarray*}
First observe that, by construction, the random variable $Y_0-X_0$ is $\gamma_h$-distributed and independent of $X_0$. Using the sub-additivity of the Poincaré constant under convolutions and the  Gaussian Poincaré inequality (see, e.g., \cite{bak-gen-led-2014}), we  deduce that 
\begin{eqnarray*}
\cpi{Y_0} & \le & \cpi{X_0}+h,
\end{eqnarray*}
which reduces our task to proving that
\begin{eqnarray}
\label{goal}
\cpi{X_1} & \le  &\cpi{Y_0}+h.
\end{eqnarray}
Let us first establish this under the additional assumption that $\pi$ is log-smooth, i.e.  $\hess V \preccurlyeq \beta I_d$ for some $\beta<\infty$. 
To do so, we rely on a clever continuous-time stochastic interpolation between $Y_0$ and $X_1$ introduced in \cite{che-che-sal-wib-2022}.
		More precisely, it is shown therein  that $X_1\stackrel{d}{=}U_h$, where $(U_t)_{t\in[0,h]}$ solves the SDE
		\begin{eqnarray}
		\label{eq:backward-BM}
		U_0 = Y_0, \quad \mathrm{d}U_t & = & \nabla \log  f_{h-t}(U_t)\mathrm{d}t + \mathrm{d}B_t,
		\end{eqnarray}
with $f_t$ denoting the density of $\pi*\gamma_t$. Following a strategy used in \cite{vem-wib-2019}, we now track the evolution of the Poincaré constant along an appropriate time-discretization of this SDE. Specifically, given a resolution $n\in\mathbb N$, we consider the Euler--Maruyama discretization $(\tilde U_0,\ldots,\tilde U_n)$ of  \eqref{eq:backward-BM} with  step size $\delta\coloneqq \frac{h}{n}$, defined inductively by
		\begin{eqnarray*}
\tilde U_0 = Y_0, \quad  \tilde U_{j+1} & \coloneqq & \tilde U_j + \delta \nabla \log f_{h-\delta j}(\tilde U_j) + B_{\delta\tond{j+1}} -B_{\delta j}.
		\end{eqnarray*}
As above, the sub-additivity of $\mu\mapsto\cpi{\mu}$ under convolutions  yields
		\begin{eqnarray}
		\label{eq:hessft}
\cpi{\tilde U_{j+1}} & \leq & \cpi{\tilde U_j + \delta \nabla \log f_{h-\delta j}(\tilde U_j)} + \delta. 
		\end{eqnarray}
To estimate the right-hand side, we recall that by assumption, $0\preccurlyeq -\nabla^2 \log f_0\preccurlyeq \beta I_d$ for some $\beta<\infty$, and that this property is preserved under the heat flow, i.e.
		\begin{eqnarray*}
		\forall t\ge 0,\quad 0 \ \preccurlyeq\ -\hess \log f_t & \preccurlyeq & \beta I_d,
		\end{eqnarray*}
see \cite{sau-wel-2014} for the lower bound and equation (6) in \cite{mik-she-2023} for the upper bound.
Consequently,  the gradient-descent map $x \to x + \delta \nabla \log f_{t}(x)$ is $1$-Lipschitz as soon as $\beta\delta\le 1$, which we can enforce by choosing $n\ge h\beta$. Since the Poincaré constant can not increase under $1-$Lipschitz pushforwards (see \cite{cor_era-2002}), we deduce that 
\begin{eqnarray*}
\cpi{\tilde U_j + \delta \nabla \log f_{h-\delta j}(\tilde U_j)} & \le & \cpi{\tilde U_j}.
\end{eqnarray*}
Inserting this into \eqref{eq:hessft} and solving the resulting recursion, we conclude that 
\begin{eqnarray*}
\cpi{\tilde U_n} & \le & \cpi{Y_0}+h.
\end{eqnarray*}
Sending $n\to\infty$ 
gives \eqref{goal}, since the Euler--Maruyama approximation $\tilde U_n$ converges in distribution to $U_h\stackrel{d}{=}X_1$ as the resolution $n$ tends to infinity. Finally, to remove our log-smoothness assumption on $\pi$, we fix a regularization parameter $\eps>0$ and consider the random sequence  $X_0^\eps,Y_0^\eps,X_1^\eps,\ldots$  generated by the Proximal Sampler with  initialization $\mu_0$, step size $h$, and regularized target $\pi_\eps:=\pi*\gamma_\varepsilon$. Since the latter is log-concave and log-smooth, the first step of the proof ensures that 
\begin{eqnarray}
\label{goal:eps}
\cpi{X^\eps_1} & \le & \cpi{Y_0^\eps}+h. 
\end{eqnarray}
But by construction, we have $\mathrm{law}(Y_0^\eps)=\mu_0*\gamma_h=\mathrm{law}(Y_0)$, and for each $y\in\R^d$, 
\begin{eqnarray*}
\mathrm{law}(X^\eps_1|Y^\eps_0=y) & = & \frac{e^{-\frac{|x-y|^2}{2h}}\pi_\eps(\mathrm{d}x)}{\int e^{-\frac{|z-y|^2}{2h}}\pi_\eps(\mathrm{d}z)} \ \xrightarrow[\varepsilon\to 0]{} \ 
 \frac{e^{-\frac{|x-y|^2}{2h}}\pi(\mathrm{d}x)}{\int e^{-\frac{|z-y|^2}{2h}}\pi(\mathrm{d}z)} 
 \ = \ \mathrm{law}(X_1|Y_0=y),
\end{eqnarray*}
simply because $\pi_\eps\to\pi$ as $\eps\to 0$. Thus, $\mathrm{law}(X_1^\eps)\to \mathrm{law}(X_1)$ as $\eps\to 0$, and we may safely pass to the limit in \eqref{goal:eps} to obtain \eqref{goal}.
\end{proof}

	\begin{remark}[Extensions] \label{rk:extensions}The above argument is rather robust. For example, applying it to \eqref{eq:LMC} gives a simple alternative proof of the celebrated  local Poincaré inequality \eqref{eq:LPI}, and the same reasoning actually also yields local log-Sobolev inequalities. When the potential $V$ is strongly log-concave, sharp improved estimates on those constants can be derived accordingly, using the strong contractivity of the gradient-descent map.
	\end{remark}
We will also need the following analogue of the mixing-time estimate \eqref{eq:sal-fast-tv-mixing-small-kl}. 
	\begin{lemma}[Mixing-time estimate for the Proximal Sampler]\label{lem:fast-tv-mix-small-kl-proximal}
		We have
		\begin{eqnarray*}
\tmixa{\mu_0,\eps} & \leq & \ceil*{\hatcpi{\pi}\frac{1+\kldiv{\mu_0}{\pi}}{\varepsilon}}.
		\end{eqnarray*}
	\end{lemma}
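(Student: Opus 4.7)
The plan is to carry over, to the discrete-time Proximal Sampler, the proof of the continuous-time mixing bound \eqref{eq:sal-fast-tv-mixing-small-kl} cited from \cite[Lemma~7]{sal-2023}. That argument combined two ingredients: (i) geometric contraction of the chi-squared divergence $\chisq{\mu_t}{\pi}$ along the Langevin semigroup under the Poincaré inequality for $\pi$, and (ii) a classical interpolation between $\chi^2$, KL and TV. I would mirror both ingredients here, replacing $\cpi{\pi}$ with its natural discrete-time analogue $\hatcpi{\pi}=1+\cpi{\pi}/h$.

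Step 1. First, I would establish a geometric chi-squared contraction for the Proximal Sampler of the form
$$\chisq{\mu_{k+1}}{\pi} \ \le\ \Bigl(1-\tfrac{c}{\hatcpi{\pi}}\Bigr)\chisq{\mu_k}{\pi},$$
for some absolute constant $c>0$. This is the direct discrete-time counterpart of $\chisq{\mu_t}{\pi}\le e^{-2t/\cpi{\pi}}\chisq{\mu_0}{\pi}$, and can be derived by viewing a single Proximal iteration as the composition of a forward Gaussian convolution---after which the effective target is $\pi*\gamma_h$, with $\cpi{\pi*\gamma_h}\le \cpi{\pi}+h=h\hatcpi{\pi}$ by the sub-additivity of the Poincaré constant under convolutions---and a backward conditioning step, which is a Markov kernel stationary with respect to $\pi$ and therefore a chi-squared contraction. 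A closely related calculation appears in \cite{che-che-sal-wib-2022}, from which one would import this contraction essentially as a black box.

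Step 2. With the geometric chi-squared decay in hand, I would mimic the interpolation of \cite[Lemma~7]{sal-2023}. The key tools are the elementary inequalities $\totvar{\mu}{\pi}\le \tfrac12\sqrt{\chisq{\mu}{\pi}}$ and $\kldiv{\mu}{\pi}\le \log(1+\chisq{\mu}{\pi})$, together with the data-processing monotonicity $\kldiv{\mu_{k+1}}{\pi}\le\kldiv{\mu_k}{\pi}$. The idea is that after the smoothing effect of a few initial iterations, $\chisq{\mu_k}{\pi}$ becomes finite and quantitatively controlled by $\kldiv{\mu_0}{\pi}$; geometric chi-squared contraction then drives it small enough that $\totvar{\mu_k}{\pi}\le \tfrac12\sqrt{\chisq{\mu_k}{\pi}}\le \eps$ within $K:=\ceil*{\hatcpi{\pi}(1+\kldiv{\mu_0}{\pi})/\eps}$ steps.

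The main obstacle will be Step 1: ensuring that the one-step chi-squared contraction holds with the prefactor $c/\hatcpi{\pi}$ for some explicit $c>0$, rather than a weaker rate. Once that is in place, Step 2 is a direct transposition of the continuous-time interpolation from \cite[Lemma~7]{sal-2023}, and the final bound emerges by optimizing the tradeoff between the initial smoothing phase and the subsequent chi-squared-contraction phase.
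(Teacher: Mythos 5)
Your proposal takes essentially the same route as the paper: import a geometric chi-squared contraction with rate $1/\hatcpi{\pi}$ from \cite{che-che-sal-wib-2022} and then transpose the $\chi^2$--KL--TV interpolation of \cite[Lemma~7]{sal-2023}. Two small remarks are in order. The mechanism you sketch in Step~1 (forward kernel carrying $\pi$ to $\pi*\gamma_h$, backward kernel carrying $\pi*\gamma_h$ back to $\pi$) gives, via data processing, only the \emph{non-expansion} $\chisq{\mu_{k+1}}{\pi}\le\chisq{\mu_k}{\pi}$; the strict rate is the genuinely nontrivial content of \cite{che-che-sal-wib-2022} and does not follow from the Poincaré estimate $\cpi{\pi*\gamma_h}\le\cpi{\pi}+h$ that you invoke, so treating it as a black box (as you ultimately propose, and as the paper does, citing $\chisq{\mu_k}{\pi}\le(1+h/\cpi{\pi})^{-2k}\chisq{\mu_0}{\pi}$) is the right call. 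Second, the paper's passage from the geometric rate $(1+h/\cpi{\pi})^{-2k}$ to the exponential form $\exp(-2k/\hatcpi{\pi})$ rests on the elementary bound $e^{1/u}\le u/(u-1)$ for $u\ge1$, applied with $u=\hatcpi{\pi}$; this is precisely what makes $\hatcpi{\pi}$ a drop-in replacement for $\cpi{\pi}$ in the continuous-time argument of \cite[Lemma~7]{sal-2023}, after which no additional tradeoff optimization between a smoothing phase and a contraction phase is needed.
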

\begin{proof}
It was shown in \cite{che-che-sal-wib-2022} that for any $k\in\mathbb N$, 
\begin{eqnarray*}
\chisq{\mu_k}{\pi} & \leq & \tond*{1+\frac{h}{\cpi{\pi}}}^{-2k}\chisq{\mu_0}{\pi}\\
& \leq & \exp\left(-\frac{2k}{\hatcpi{\pi}}\right)\chisq{\mu_0}{\pi},
	\end{eqnarray*}   
where the second line follows from our definition of $\hatcpi{\pi}$ and the bound $e^{\frac{1}{u}}\le \frac{u}{u-1}$, valid for any $u\ge 1$. The remainder of the proof is then exactly as in \cite[Lemma 7]{sal-2023}. 
\end{proof}
We now have everything we need to mimic the proof of Theorem \ref{thm:mix-window-LD}.
		\begin{proof}[Proof of Theorem \ref{thm:mix-window-proximal sampler}]
	Fix $\eps\in \tond*{0,\frac12}$ and	set $k_0:= \tmixa{\mu_0,1-\eps}$. Our W-TV transport inequality (Theorem \ref{th:WTV}) combined with the parabolic regularization estimate \eqref{eq:KL-W2-reg-prox-sampl} gives
\begin{eqnarray*}
\kldiv{\mu_{k_0+k}}{\pi} & \leq & \frac{4\hatcpi{\pi}+4\hatcpi{\mu_{k_0}}}{\eps k}.
\end{eqnarray*}
As above, we can then apply Lemma \ref{lem:fast-tv-mix-small-kl-proximal} to $\mu_{k_0+k}$ instead of $\mu_0$ to obtain
\begin{eqnarray*}
\wmix(\mu_0,\eps) & \leq & k + 1+ \hatcpi{\pi}\tond*{\frac{1}{\eps}+\frac{4\hatcpi{\pi}+4\hatcpi{\mu_{k_0}}}{\eps^2 k}}.
\end{eqnarray*}
But this holds for any $k\in\mathbb N$, and choosing $k=\ceil*{\frac{2}{\varepsilon}\sqrt{\hatcpi{\pi}\tond*{\hatcpi{\pi}+\hatcpi{\mu_{k_0}}}}}$ yields
\begin{eqnarray*}
\wmix(\mu_0,\eps) & \leq & 2+\frac{\hatcpi{\pi}}{\varepsilon}+ \frac{4}{\varepsilon}\sqrt{\hatcpi{\pi}\tond*{\hatcpi{\pi}+\hatcpi{\mu_{k_0}}}}.
\end{eqnarray*}
The result now readily follows from Lemma \ref{lm:local-concentration-PI} and the sub-additivity of $\sqrt{\cdot}$.		
	\end{proof}
		\subsection*{Acknowledgment}
F.P. thanks Yuansi Chen for helpful comments.
    J.S. is supported by the ERC consolidator grant CUTOFF (101123174). Views and opinions expressed are however those of the authors only and do not
necessarily reflect those of the European Union or the European Research Council Executive
Agency. Neither the European Union nor the granting authority can be held responsible.

\printbibliography

\end{document}